\newtheorem{theorem}{Theorem}[section]
\newtheorem{lemma}[theorem]{Lemma}
\newtheorem*{thmA}{Theorem A}
\newtheorem*{thmB}{Theorem B}
\newtheorem*{thmC}{Theorem C}
\theoremstyle{remark}
\newtheorem{remark}[theorem]{Remark}
\numberwithin{equation}{section}
\begin{document}
\title[Extensions of a theorem of P. Hall on indexes of maximal subgroups]{Extensions of a theorem of P. Hall on indexes of maximal subgroups}
    \author[Beltr\'an and Shao]{
      Antonio Beltr\'an\\
     Departamento de Matem\'aticas\\
      Universitat Jaume I \\
     12071 Castell\'on\\
      Spain\\
      \\Changguo Shao \\
College of Science\\ Nanjing University of Posts and Telecommunications\\
     Nanjing 210023 Yadong\\
      China\\
     }

 \thanks{Antonio Beltr\'an: abeltran@uji.es ORCID ID: https://orcid.org/0000-0001-6570-201X \newline \indent
Changguo Shao: shaoguozi@163.com ORCID ID: https://orcid.org/0000-0002-3865-0573}

\keywords{Nilpotent subgroups, maximal subgroups, indexes of maximal subgroups, solvability criterion}

\subjclass[2010]{20E28, 20D15, 20D06 }

\begin{abstract}
We extend a classical theorem of P. Hall that claims that if the index of every maximal subgroup of a finite group $G$ is a prime or the square of a prime,
then $G$  is solvable.
Precisely, we prove that if one allows, in addition, the possibility that every  maximal subgroup of $G$ is nilpotent instead of having prime or squared-prime index, then $G$ continues to be solvable.
  Likewise, we obtain the solvability of $G$ when we assume that every proper non-maximal subgroup of $G$ lies in some subgroup of index prime or squared prime.
\end{abstract}

\maketitle

\section{Introduction}

A well-known theorem of P. Hall  \cite[9.4.1]{Robinson} establishes that if the index of every maximal subgroup of a finite group $G$ is a prime number or the square of a prime number,
 then $G$ is solvable. First, we extend this result by allowing maximal subgroups of $G$ the possibility of being nilpotent too, so we obtain the following  solvability criterion.

\begin{thmA} If every maximal subgroup of a finite group $G$ either is nilpotent or has index a
prime or the square of a prime, then $G$ is solvable.
\end{thmA}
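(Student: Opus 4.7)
The plan is to argue by contradiction, letting $G$ be a counterexample of minimum order. A first routine check is that the hypothesis passes to quotients: every maximal subgroup of $G/N$ has the form $M/N$ with $M$ maximal in $G$ containing $N$, and both the nilpotency of $M$ and the value of $[G:M]$ are inherited by $M/N$. By minimality, every proper quotient of $G$ is therefore solvable. A standard argument then forces $G$ to have a unique minimal normal subgroup $N$ (two distinct ones would embed $G$ into a product of solvable quotients), and $N$ must be non-abelian, since otherwise $N$ and $G/N$ would both be solvable and so would $G$. Hence $N = S_1 \times \cdots \times S_k$ with the $S_i$ isomorphic non-abelian simple groups. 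Uniqueness of $N$ also gives $F(G) = 1$ and $C_G(N) = 1$, so that $G$ embeds into $\mathrm{Aut}(N)$.

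Second, since the classical Hall theorem already handles the case when every maximal subgroup of $G$ has index a prime or the square of a prime, our minimal counterexample $G$ must contain at least one nilpotent maximal subgroup $M$. Since $N$ is not nilpotent, $M$ cannot contain $N$, so by maximality $G = MN$ and
\[
[G:M] = [N : M \cap N],
\]
with $M \cap N$ a nilpotent subgroup of $N$. This is already a very strong restriction on $N$. To sharpen it I would invoke Rose's theorem (a nilpotent maximal subgroup of odd order forces solvability) to conclude that $|M|$ is even, and then the classification of non-solvable finite groups admitting a nilpotent maximal subgroup (Janko, Thompson and others): the only possibilities for the simple direct factor $S$ of $N$ are very restricted, essentially rank-one groups of Lie type such as $\mathrm{PSL}_2(q)$ under specific arithmetic conditions on $q$.

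With this short list in hand, the plan is to finish by inspecting, in each case, the other maximal subgroups of $G$. For $N$ a simple $\mathrm{PSL}_2(q)$ one examines the dihedral subgroups and the normalizers of non-Borel Sylow subgroups, whose indexes in $G$ are well known and, for all but trivial $q$, are neither prime nor a prime squared, while the subgroups themselves are manifestly non-nilpotent; this contradicts the hypothesis. If $k>1$, an additional contradiction is obtained through the maximal subgroup $N_G(S_1)$, whose index in $G$ equals $k$ and whose non-nilpotent structure is forced by the wreath-type embedding $G \hookrightarrow \mathrm{Aut}(S)\wr S_k$.

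The principal obstacle is the last step: pinning down exactly which simple groups survive the two constraints (existence of a nilpotent maximal subgroup \emph{and} every other maximal subgroup nilpotent or of prime/squared-prime index), and then, for each candidate, producing an explicit maximal subgroup that violates the dichotomy. This reduces to careful arithmetic with the orders and indexes of the standard maximal subgroups of $\mathrm{PSL}_2(q)$ and its close relatives, and is where the interaction between the nilpotency clause and Hall's index clause has to be balanced against the known subgroup structure of the surviving simple groups.
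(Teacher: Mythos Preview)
Your initial reductions coincide with the paper's, but from that point on your strategy diverges sharply. You locate a \emph{nilpotent} maximal subgroup and then plan to invoke the Rose--Thompson--Baumann classification of non-solvable groups with such a subgroup, followed by a case-by-case elimination of the surviving $\mathrm{PSL}_2(q)$-type candidates. The paper instead goes after a \emph{non}-nilpotent maximal subgroup: taking the largest prime $p$ dividing $|N|$ and a Sylow $p$-subgroup $P\le N$, Glauberman--Thompson's criterion forces $\mathbf N_G(\mathbf Z(J(P)))$ to lie in a maximal subgroup $M$ that is not $p$-nilpotent, hence not nilpotent, hence of index $q$ or $q^2$. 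Since $\mathbf N_G(P)\le M$, Sylow counting gives $|G:M|\equiv 1\pmod p$, and with $q\le p$ this collapses to $p=3$, $q=2$, $|N:N\cap M|=4$, embedding $N$ into $\mathrm{Sym}(4)$---a contradiction in three lines, with no classification and no case analysis.

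So your route is not wrong in principle, but it is substantially heavier and, as you acknowledge, left incomplete at exactly the delicate point. Two specific cautions if you pursue it: first, the classification results you cite constrain $G$ itself, and passing from ``$G$ has a nilpotent maximal subgroup'' to ``the simple factor $S$ of $N$ lies on a short list'' needs the observation that $M\cap N$ maps onto a nilpotent subgroup of each $S_i$ of index dividing $[G:M]$, not that it is maximal in $S_i$. Second, your $k>1$ argument via $\mathbf N_G(S_1)$ is shaky: this subgroup contains $N$, need not be maximal, and when it is its index $k$ could well be prime. The paper's Glauberman--Thompson manoeuvre sidesteps all of this; it is worth internalising as the ``right'' lemma for producing, inside any non-solvable minimal normal subgroup, a maximal overgroup that is guaranteed non-nilpotent.
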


We note that Theorem A also extends  other recent results \cite{LPZ,YJK} that demonstrate, among other properties,
 the solvability of those groups whose non-nilpotent maximal subgroups have prime index. Our proof of Theorem A is quite elementary;
 it basically utilizes the $p$-nilpotency criterion of Glauberman and Thompson  for an odd prime $p$ \cite[Satz IV.6.2]{Hup}.

\medskip
It is worth mentioning that another extension of Hall's theorem was given in \cite{MT}.  It was proved that when every proper non-maximal
 subgroup of a  group $G$  lies in a subgroup of prime index, then $G$ is solvable. Indeed, it turns out that $G/{\bf F}(G)$ is supersolvable, where ${\bf F}(G)$ denotes the Fitting subgroup of $G$. We go a step further, 
 by obtaining the following solvability criterion that also generalizes Hall's Theorem.

\begin{thmB} Let $G$ be a finite group. If every proper non-maximal subgroup of  $G$ is contained in some subgroup whose index is a prime or the square of a prime, then $G$ is solvable.
\end{thmB}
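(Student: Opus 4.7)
The plan is to argue by induction on $|G|$, supposing toward contradiction that $G$ is a minimum counterexample, so $G$ is non-solvable. Since $|G|$ must be composite, the trivial subgroup is proper and non-maximal, and applying the hypothesis to it already supplies a subgroup $H\le G$ with $[G:H]\in\{p,p^2\}$; thus every minimum counterexample automatically contains a proper subgroup of prime or squared-prime index.

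The crucial reduction is to show that every maximal subgroup $M$ of $G$ with $[G:M]\notin\{p,p^2\}$ inherits the hypothesis of Theorem~B, and is therefore solvable by minimality. Indeed, for any proper $L<M$, the chain $L<M<G$ exhibits $L$ as non-maximal in $G$, so the hypothesis yields $H\le G$ with $L\le H$ and $[G:H]\in\{p,p^2\}$; the constraint on $[G:M]$ rules out both $H\subseteq M$ (by divisibility) and $M\subseteq H$ (by maximality of $M$ and $H\ne G$), so $MH=G$ and $[M:M\cap H]=[G:H]\in\{p,p^2\}$. If every maximal subgroup of $G$ had index $p$ or $p^2$, P.~Hall's theorem would already yield $G$ solvable, so $G$ must admit at least one solvable maximal subgroup $M_0$ of index outside $\{p,p^2\}$.

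For the quotient reduction, pick a minimal normal subgroup $N$ of $G$. Given a proper non-maximal $M/N$ in $G/N$, lift to $M$ (still proper non-maximal in $G$) and take the $H$ provided by the hypothesis; if $HN<G$ then $HN/N$ is a valid overgroup in $G/N$, so the only obstruction is $HN=G$. Assuming $N$ abelian, $N\cap H$ is immediately $G$-normal, hence equals $1$ by the minimality of $N$, forcing $|N|=[G:H]\in\{p,p^2\}$. Therefore, for every abelian minimal normal $N$, either $G/N$ satisfies the hypothesis (and is solvable by minimality, contradicting the non-solvability of $G$), or $|N|\in\{p,p^2\}$.

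The main obstacle is combining these reductions into a contradiction. In the abelian case, once $|N|\in\{p,p^2\}$ is in hand, the $G$-action on $N$ factors through $\mathrm{GL}_1(\mathbb{F}_p)$ or $\mathrm{GL}_2(\mathbb{F}_p)$, which, together with the solvable maximal subgroup $M_0$ and the availability of Theorem~A and of the theorem of \cite{MT}, should constrain $G$ enough to conclude. The harder situation is the one in which every minimal normal subgroup is non-abelian (a direct product of isomorphic non-abelian simples), where the $G$-normality argument on $N\cap H$ collapses; there I would exploit the core-free action on $[G:H]\le p^2$ points in combination with $M_0$, presumably via classical bounds on transitive groups of small prime-power degree, to pin down the simple composition factors. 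Coordinating all these ingredients, particularly when $G$ possesses several minimal normal subgroups of mixed character, is what I expect to be the genuinely difficult step.
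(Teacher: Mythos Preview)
Your quotient reduction is overcomplicated and rests on a misreading of the setup. If $M/N$ is proper non-maximal in $G/N$, its preimage $M\ge N$ is proper non-maximal in $G$, and the $H$ supplied by the hypothesis already satisfies $H\ge M\ge N$; hence $HN=H$ and $H/N$ has index $[G:H]\in\{p,p^2\}$ in $G/N$. The obstruction $HN=G$ you worry about never occurs, so the hypothesis passes to every quotient regardless of whether $N$ is abelian. Consequently every minimal normal subgroup of a minimum counterexample is non-solvable, and your entire discussion of the abelian case is vacuous. (Incidentally, in your first reduction the equality $MH=G$ is unjustified since $MH$ need not be a subgroup; however, the injection $m(M\cap H)\mapsto mH$ of $M/(M\cap H)$ into $G/H$ still yields $[M:M\cap H]\in\{p,p^2\}$, so your conclusion survives.)

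The genuine gap is that you stop exactly where the argument begins. With $N$ a direct product of copies of a non-abelian simple group $S$, one must first manufacture a maximal subgroup of prime or squared-prime index that does \emph{not} contain $N$; the $H$ you obtain from the trivial subgroup may well contain $N$. The paper produces such an $M$ via the Frattini argument on a Sylow $p$-subgroup of $N$ for $p\ge 5$, together with Thompson's normal $p$-complement theorem. From $G=MN$ and $[G:M]=[N:M\cap N]\in\{q,q^2\}$ one is then forced into Guralnick's CFSG-based classification of simple groups possessing a subgroup of prime-power index, supplemented by a new companion result (Theorem~C of the paper) determining the simple groups with a subgroup of \emph{squared}-prime index, whose proof comes down to the Nagell--Ljunggren diophantine equation. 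Finally one must eliminate, case by case, ${\rm Alt}(q)$, ${\rm Alt}(q^2)$, ${\rm PSL}_n(q_0)$, ${\rm PSL}_2(11)$, $M_{11}$, $M_{23}$, ${\rm PSL}_2(8)$ and ${\rm PSL}_5(3)$, each time exhibiting an explicit non-maximal subgroup that cannot lie under any subgroup of prime or squared-prime index. Your phrase ``classical bounds on transitive groups of small prime-power degree'' does not capture any of this machinery, and the solvable maximal subgroup $M_0$ you construct, while a valid observation, does not by itself constrain $S$ and plays no role in the paper's proof.
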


The proof of Theorem B, however,  relies on several results depending on the Classification of Finite Simple Groups,
among others,  Guralnick's classification of simple groups having a prime power index subgroup.  We also need to do a detailed analysis on certain simple groups
that requires the knowledge of the  structure of maximal subgroups
  of several nonabelian simple groups, for which we will appeal to different sources \cite{BHR, Con, Hup, Liebeck2}.

  \medskip
 We also remark that there are no more ways to extend Theorem B in the sense that no more possible prime-powers can be included in the assumptions of Theorem B so as to get a new solvability criterion.
It suffices to  take into account that every maximal subgroup of
 the simple group ${\rm PSL}_2(7)$ has index $7$ or $8$, so every proper non-maximal subgroup lies in a subgroup whose index is prime or the cube of a prime.

  \medskip
 On the other hand, for proving Theorem B we first  need to establish a classification of nonabelian simple groups having a squared prime index subgroup.
 This result has interest on its own and we are not aware that has already been published.

 \begin{thmC} Let $S$ be a finite nonabelian simple group that has a subgroup of index $q^2$ with $q$ prime. Then one of the following holds
\begin{enumerate}
\item[(a)] $S \cong {\rm Alt}(q^2)$ with subgroups of index $q^2$ $(q\geq 3)$, or
\item[(b)] $S\cong {\rm PSL}_2(8)$ with subgroups of index $3^2$, or
\item[(c)] $S\cong {\rm PSL}_5(3)$ with  subgroups of index $11^2$.
\end{enumerate}
\end{thmC}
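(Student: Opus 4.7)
The plan is to reduce Theorem~C to Guralnick's classification of nonabelian finite simple groups possessing a subgroup of prime power index. My first move is to show that a subgroup $H$ of $S$ of index $q^2$ may be taken to be maximal: pick any maximal $M$ with $H\leq M\leq S$, so $[S:M]\in\{q,q^2\}$. If $[S:M]=q$, the coset action embeds the simple nonabelian group $S$ into ${\rm Alt}(q)$, so $|S|$ divides $q!/2$. But the $q$-part of $q!/2$ is exactly $q$ (as $q$ is prime), whereas $q^2\mid |S|$ because $[S:H]=q^2$; this contradiction forces $[S:M]=q^2$, and hence $H=M$ is maximal.

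Next I would invoke Guralnick's theorem: for a maximal subgroup $M$ of prime power index $p^a$ in a nonabelian finite simple group $S$, one of the following holds:
\begin{enumerate}[(i)]
\item $S={\rm Alt}(n)$ with $n=p^a\geq 5$ and $M={\rm Alt}(n-1)$;
\item $S={\rm PSL}_n(r)$ for some integer $n\geq 2$ and prime power $r$, $M$ is the stabilizer of a point or a hyperplane of ${\mathbb P}^{n-1}({\mathbb F}_r)$, and $p^a=(r^n-1)/(r-1)$;
\item $(S,[S:M])$ is one of $({\rm PSL}_2(11),11)$, $(M_{11},11)$, $(M_{23},23)$, or $({\rm PSU}_4(2),27)$.
\end{enumerate}

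Setting $p^a=q^2$, family (iii) is discarded immediately because none of $11$, $23$, $27$ is a prime square. Family (i) gives $n=q^2$ with the constraint $q\geq 3$ (otherwise $n\leq 4$ and $S$ would not be simple), yielding conclusion~(a). Family (ii) reduces to the Diophantine equation
\[
\frac{r^n-1}{r-1}=q^2
\]
with $r\geq 2$ a prime power, $n\geq 2$, and $q$ prime. For $n=2$ this reads $r=(q-1)(q+1)$; two positive integers differing by $2$ can be powers of a common prime only when both are powers of $2$, so $q=3$ and $r=8$, giving $S={\rm PSL}_2(8)$ and conclusion~(b). For $n\geq 3$ I would invoke Ljunggren's classical theorem on $(r^n-1)/(r-1)=y^2$: its only integer solutions with $r\geq 2$ and $n\geq 3$ are $(r,n,y)=(3,5,11)$ and $(7,4,20)$. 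Only the first has $y$ prime, and it produces $S={\rm PSL}_5(3)$ with a subgroup of index $121=11^2$, which is conclusion~(c).

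The main obstacle is the Diophantine step for $n\geq 3$ in family (ii); Ljunggren's theorem supplies the needed input, and one must then cross-check that the surviving pair $(r,n)=(3,5)$ really corresponds to a simple group with a point-stabilizer of index $11^2$ and that $q=11$ is prime. Everything else is a routine traverse of Guralnick's list together with the elementary reduction that makes $H$ maximal.
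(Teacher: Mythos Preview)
Your argument is correct and follows the same skeleton as the paper's: apply Guralnick's classification, discard the sporadic and unitary items whose indices are not prime squares, keep ${\rm Alt}(q^2)$, and for ${\rm PSL}_n(r)$ solve $(r^n-1)/(r-1)=q^2$, invoking Ljunggren's (Nagell--Ljunggren) theorem for $n\geq 3$. The differences are tactical rather than structural. You add a preliminary reduction making $H$ maximal, which the paper omits since its stated form of Guralnick's theorem already applies to arbitrary subgroups. For $n=2$ you factor $r=(q-1)(q+1)$ and finish by an elementary parity argument, whereas the paper writes $q^2=r^d+1$ and appeals to Catalan's conjecture; your route is strictly lighter at this step. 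Conversely, the paper first runs a Singer cycle computation to force $(n,s-1)=1$ before invoking the number theory, a step you bypass by feeding all $n\geq 3$ directly into Ljunggren's theorem---harmless, since the extraneous solution $(r,n,y)=(7,4,20)$ has $y$ non-prime anyway. One small wrinkle: your claim that two positive integers differing by $2$ can be powers of a common prime only when both are powers of $2$ fails for the pair $(1,3)=(3^0,3^1)$; but $q-1=1$ gives $q=2$, $r=3$, and ${\rm PSL}_2(3)$ is not simple, so the exception is vacuous here.
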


\medskip
Throughout this paper, all groups are supposed to be finite, and we follow standard notation (e.g. \cite{Robinson}).

\section{Preliminaries}

We start with an application of Glauberman-Thompson's $p$-nilpotency criterion for an odd prime $p$ mentioned in the Introduction, which is useful for our purposes.
We stress, however, that the following lemma is not true for $p=2$: the Mathieu group $M_{10}\cong {\rm Alt}(6).2$, with a  minimal normal subgroup isomorphic to ${\rm Alt}(6)$,
 which is maximal too,  is an example of it because every maximal subgroup of $M_{10}$ distinct from ${\rm Alt}(6)$ is $2$-nilpotent.

\begin{lemma}\label{maximal} Let $G$ be a finite group and $N$  minimal normal subgroup of $G$ that is not solvable. Let $p$ be any odd prime dividing $|N|$ and $P$ a Sylow $p$-subgroup of $N$. Then there exists a maximal subgroup in $G$ such that contains ${\bf N}_G(P)$, does not contain $N$, and is not $p$-nilpotent.
\end{lemma}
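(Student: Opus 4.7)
The plan is to build the desired maximal subgroup $M$ as any maximal subgroup of $G$ containing $H := {\bf N}_G(Q)$, where $Q := {\bf Z}(J(P))$ is the center of the Thompson subgroup of $P$. The point of this particular choice is that $Q$ is a nontrivial characteristic subgroup of $P$, so ${\bf N}_G(P) \leq H$ is automatic, while the Glauberman--Thompson $p$-nilpotency criterion for odd $p$ is exactly what will force $H$ to be non-$p$-nilpotent; since $p$-nilpotency is inherited by subgroups, any maximal $M$ sitting above $H$ will then also fail to be $p$-nilpotent.

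First I would use the hypothesis that $N$ is a nonsolvable minimal normal subgroup of $G$: this forces $N$ to be a direct product of isomorphic nonabelian simple groups. Consequently the normal subgroups of $N$ are precisely the products of subsets of these simple factors, so $N$ has no nontrivial normal $p$-subgroup and, in particular, $N$ is not $p$-nilpotent. With $Q = {\bf Z}(J(P))$ a nontrivial characteristic subgroup of $P$, the Glauberman--Thompson criterion applied inside $N$ then yields that ${\bf N}_N(Q)$ is not $p$-nilpotent. Since $Q \leq N$, one has ${\bf N}_G(Q) \cap N = {\bf N}_N(Q)$, and hence $H := {\bf N}_G(Q)$ is itself not $p$-nilpotent, as it contains a non-$p$-nilpotent subgroup.

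To conclude, observe that $H < G$: otherwise $Q$ would be a nontrivial normal $p$-subgroup of $G$, hence of $N$, contradicting the previous paragraph. Because $Q$ is characteristic in $P$, ${\bf N}_G(P) \leq H$; so choosing any maximal subgroup $M$ of $G$ containing $H$, we get $M \supseteq {\bf N}_G(P)$. The Frattini argument gives $G = N \cdot {\bf N}_G(P) \leq NM$, so $N \leq M$ would force $M = G$, a contradiction; thus $N \not\leq M$. Finally, if $M$ were $p$-nilpotent, its subgroup $H$ would also be $p$-nilpotent, against what has just been shown, so $M$ is not $p$-nilpotent. The only delicate point of the argument is the initial choice of the characteristic subgroup $Q = {\bf Z}(J(P))$: this is exactly what allows Glauberman--Thompson to transfer the failure of $p$-nilpotency from $N$ to a subgroup of $G$ that still contains ${\bf N}_G(P)$.
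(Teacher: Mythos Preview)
Your proof is correct and follows essentially the same route as the paper: both take $Q={\bf Z}(J(P))$, use that it is characteristic in $P$ to get ${\bf N}_G(P)\leq {\bf N}_G(Q)$, show ${\bf N}_G(Q)$ is proper, pick a maximal $M$ above it, exclude $N\leq M$ via Frattini, and use the Glauberman--Thompson criterion (applied to $N$) to rule out $p$-nilpotency of $M$. The only cosmetic difference is that you first deduce that ${\bf N}_N(Q)$ is non-$p$-nilpotent and then pass upward to $H$ and $M$, whereas the paper assumes $M$ is $p$-nilpotent, passes downward to ${\bf N}_N(Q)$, and derives the contradiction; these are contrapositives of the same implication.
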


\begin{proof}
Notice that $N$ must be a direct product of isomorphic nonabelian simple groups. Let $p$ an odd prime divisor of $|N|$ and let $P\neq 1$ be a Sylow $p$-subgroup of $N$. Let $J(P)\neq 1$ be the Thompson subgroup of $P$ (see  \cite[IV.6.1.]{Hup} for a definition). Clearly ${\bf Z}(J(P))$ is not normal in $G$ since cannot be normal in $N$, and accordingly, ${\bf N}_G({\bf Z}(J(P))) $ is a  proper
 subgroup of $G$ lying in some maximal subgroup, say $M$, of $G$.  Now, if $M$  is $p$-nilpotent,  so is ${\bf N}_N({\bf Z}(J(P)))$. By Glauberman-Thompson's criterion,  $N$ is $p$-nilpotent too (with $p$ dividing $|N|$), which contradicts the non-solvability of $N$. Hence, $M$ is not $p$-nilpotent. On the other hand, the Frattini argument gives ${\bf N}_G(P)N=G$, and moreover, since ${\bf Z}(J(P))$ is characteristic in $P$, we have
 $$ {\bf N}_G(P)\leq {\bf N}_G({\bf Z}(J(P)))\leq M,$$
 so we conclude that $M$ does not contain $N$ either. Thus, the lemma is proved.
 \end{proof}

We need the following property concerning the structure of certain  subgroups of direct products of nonabelian simple groups.

\begin{lemma}\label{m2}\cite[Lemma 2.4]{Q} Let $G$ be a finite group and $N$ a minimal normal subgroup of $G$ such that $N=S_1\times \cdots\times S_t,$ with $ t\geq 1$ and $S_i$ are isomorphic nonabelian simple groups. Assume that $H$ is a maximal
subgroup of $G$ such that $G= HN$. Set $K=H\cap N$. Then one of the following holds:
\begin{itemize}
\item[(1)] $K=K_1\times \cdots\times K_t$, where $K_i< S_i$ for all $i$, and $H$ acts transitively on the
set $\{K_1,\cdots, K_t\}$.

\item[(2)] $K=A_1\times \cdots\times A_k$ is minimal normal in $H$, where $A_1\cong \cdots\cong A_k\cong S_1$, $k\mid t$ and $k<t$,
  and every nonidentity element of $A_i$ has length $t/k$ under the decomposition.
  \end{itemize}
\end{lemma}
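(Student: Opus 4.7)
The plan is to split the analysis according to whether the projections of $K=H\cap N$ onto the simple factors are surjective or proper, and to use the maximality of $H$ in one case together with a classical decomposition theorem for subdirect products of nonabelian simple groups in the other. First I would record the basic setup: $K\trianglelefteq H$ because $N\trianglelefteq G$, and $H$ permutes $\{S_1,\ldots,S_t\}$ by conjugation. Since $G=HN$ and inner automorphisms by elements of $N$ stabilise each $S_i$ setwise, the $G$-action on $\{S_1,\ldots,S_t\}$ factors through $G/N\cong H/K$, and this action is transitive by minimality of $N$; hence $H$ also acts transitively on the $S_i$. For each $i$, let $\pi_i\colon N\to S_i$ denote the natural projection and set $M_i:=\pi_i(K)$. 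The $H$-equivariance of the projections forces the $M_i$ to be $H$-conjugate, so either $M_i<S_i$ for every $i$, or $M_i=S_i$ for every $i$.

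In the first possibility, I put $K_i:=M_i$. Then $K\leq K_1\times\cdots\times K_t\leq N$ and $H$ normalises $K_1\times\cdots\times K_t$, permuting its factors transitively. Should the inclusion $K\leq K_1\times\cdots\times K_t$ be proper, the subgroup $H(K_1\times\cdots\times K_t)$ would strictly contain $H$; maximality would then force it to equal $G$, and intersecting with $N$ would give $K_1\times\cdots\times K_t=N$, contradicting $K_i<S_i$. Hence $K=K_1\times\cdots\times K_t$, which is conclusion~(1).

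In the second possibility, $K$ is a subdirect product of $S_1\times\cdots\times S_t$, and I would invoke the standard structure theorem (Scott's lemma) for subdirect products of isomorphic nonabelian simple groups: there exist a partition $\{I_1,\ldots,I_k\}$ of $\{1,\ldots,t\}$ and a decomposition $K=D_1\times\cdots\times D_k$, where each $D_j$ is a full diagonal subgroup of $\prod_{i\in I_j}S_i$ and $D_j\cong S_1$. The $D_j$ are precisely the minimal normal subgroups of $K$, so $H$ permutes them. Because $H$ is transitive on $\{1,\ldots,t\}$ and the partition $\{I_j\}$ is $H$-invariant, all blocks have common size $t/k$, so $k\mid t$, and $H$ is transitive on $\{D_1,\ldots,D_k\}$. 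Therefore $K$ is a minimal normal subgroup of $H$. The strict inequality $k<t$ follows from $K\neq N$, for if $K=N$ then $N\leq H$ and $G=HN=H$, a contradiction. Finally, each nonidentity element of $D_j$ has exactly $|I_j|=t/k$ nonidentity coordinates, as required in conclusion~(2).

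The main obstacle I foresee is justifying the diagonal decomposition invoked in the second case. A self-contained proof would proceed by induction on $t$, examining the subgroups $K\cap S_i$ and using Goursat's lemma to peel off diagonal factors; alternatively, one can identify the minimal normal subgroups of $K$ directly and show by projection arguments that each has the required diagonal form supported on a single block. Once this is in hand, the rest of the lemma reduces to careful bookkeeping with the maximality of $H$ and the $H$-equivariance of the coordinate projections.
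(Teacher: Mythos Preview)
The paper does not supply a proof of this lemma; it is quoted verbatim from \cite[Lemma~2.4]{Q} without argument, so there is nothing to compare your approach against here. That said, your proposal is correct and is the standard route to this statement. The dichotomy on the coordinate projections $M_i=\pi_i(K)$ is the right organising principle; in the proper-projection case your use of the maximality of $H$ against the $H$-invariant ``rectangular hull'' $M_1\times\cdots\times M_t$ cleanly forces $K=\prod K_i$, and in the surjective case Scott's lemma gives the block-diagonal decomposition. The bookkeeping you sketch---$H$ permutes the diagonal components $D_j$ because they are the minimal normal subgroups of $K$, the supports $I_j$ form an $H$-invariant partition so transitivity on $\{1,\ldots,t\}$ yields equal block sizes $t/k$ and transitivity on $\{D_j\}$, whence $K$ is minimal normal in $H$---is exactly what is needed, and the length condition is immediate from the definition of a full diagonal. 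Your caveat about justifying Scott's lemma is fair but not a gap: the inductive Goursat-type argument you indicate is routine and well documented.
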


Our proofs are based on the classic Guralnick's classification mentioned in the Introduction, which we state here for the reader's convenience.

\begin{lemma} {\normalfont  \cite[Theorem 1]{Gura}} \label{g}
\ Let $G$ be a nonabelian simple group with $H < G$ and
$|G : H| = p^a$, $p$ prime. One of the following holds.

\begin{itemize}

\item[$(a)$] $G={\rm Alt}(n)$, and $H\cong {\rm Alt}(n-1)$, with $n= p^a$.

\item[$(b)$] $G = {\rm PSL}_n(q)$ and $H$ is the stabilizer of a line or hyperplane. Then
$|G: H|= (q^n-1)/(q - 1) = p^a$. (Note that $n$ must be prime).

\item[$(c)$] $G={\rm PSL}_2(11)$ and $H\cong {\rm Alt}(5)$.

\item[$(d)$] $G=M_{23}$ and $H\cong M_{22}$ or $G=M_{11}$ and $H\cong M_{10}$.

\item[$(e)$] $G= {\rm PSU}_4(2)\cong {\rm PSp}_4(3)$ and $H$ is the parabolic subgroup of
index $27$.
\end{itemize}
\end{lemma}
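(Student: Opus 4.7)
The plan is to invoke the Classification of Finite Simple Groups and treat each family of nonabelian simple groups separately. A first reduction replaces $H$ by any maximal subgroup $M$ containing it: since $|G:M|$ divides $|G:H|=p^a$, it is itself a power of $p$, so it suffices to classify pairs $(G,M)$ with $M$ maximal in $G$ and $|G:M|$ a prime power, and then to verify that the listed subgroups $H$ in (a)--(e) account for all prime power index subgroups, not merely the maximal ones.

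For alternating groups $G={\rm Alt}(n)$ with $n\geq 5$, I would analyze the faithful transitive action of $G$ on $G/H$ of degree $p^a$. The natural point stabilizer ${\rm Alt}(n-1)$, of index $n=p^a$, yields case (a). To rule out everything else, I would combine the Bochert--Wielandt type lower bound on the minimal degree of a faithful primitive action of ${\rm Alt}(n)$ with a finite direct check for the small values of $n$ where that bound is not yet effective. The sporadic groups are dispatched by scanning the ATLAS list of maximal subgroups: $M_{11}$ and $M_{23}$ produce the two items in (d), and no other sporadic group contributes a maximal subgroup of prime power index.

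The main case, and the main obstacle, is the groups of Lie type. Here I would invoke the Aschbacher classification of maximal subgroups of classical groups (together with the Kleidman--Liebeck refinement) and its analog for exceptional groups. The parabolic subgroups are the obvious candidates; for $G={\rm PSL}_n(q)$ the stabilizer of a $1$-space has index $(q^n-1)/(q-1)$, and the stabilizer of a hyperplane is its graph-automorphism partner of the same index, yielding case (b). I would then apply Zsigmondy's theorem to $(q^n-1)/(q-1)$: a primitive prime divisor $\ell$ of $q^n-1$ exists outside a short list of exceptions and divides $(q^n-1)/(q-1)$, and if $n$ factors nontrivially as $mk$ then $(q^m-1)/(q-1)$ contributes a distinct prime divisor to the quotient, preventing it from being a prime power; hence $n$ must be prime. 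For non-parabolic maximal subgroups, and for other Lie type families, comparing the order formula of $G$ with the Zsigmondy primes of $|G|$ shows that $|G:M|$ fails to be a prime power except in a short list of small-rank cases, which after explicit enumeration collapse to case (c) for ${\rm PSL}_2(11)$ with $H\cong {\rm Alt}(5)$, and case (e) for ${\rm PSU}_4(2)$ with its parabolic of index $27$.

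The hardest part is controlling the small-rank exceptions in the Lie type case: Zsigmondy's theorem has well known failures (for example $q=2$ with $n=6$, or $n=2$ with $q+1$ a power of $2$), and coincidental isomorphisms among low rank classical groups (such as ${\rm PSU}_4(2)\cong {\rm PSp}_4(3)$ or ${\rm PSL}_2(7)\cong {\rm PSL}_3(2)$) mean that the same exceptional example appears under several natural descriptions and must be counted only once. A secondary subtlety is the passage from maximal to arbitrary subgroups: one must verify that no proper non-maximal $H<M$, for $M$ in the list, gives a further prime power index in $G$, which in practice is immediate from the explicit order factorization of each $M$ in the cases above.
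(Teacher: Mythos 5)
First, a point of order: the paper does not prove this lemma at all --- it is imported verbatim as Guralnick's classification \cite{Gura}, so there is no internal proof to compare your attempt against; the fair benchmark is Guralnick's published argument. Measured against that, your outline reaches the right list but by a genuinely different and much heavier route. Guralnick's engine is the elementary observation that $|G:H|=p^a$ forces $H$ to contain a full Sylow $q$-subgroup of $G$ for \emph{every} prime $q\neq p$. For $G$ of Lie type in defining characteristic $r\neq p$, a lemma going back to Tits and Seitz on overgroups of a Sylow $r$-subgroup then pins $H$ down to a parabolic subgroup directly, and the cyclotomic factorization of parabolic indexes yields case (b) (with $n$ prime, by your same Zsigmondy argument) and the exception (e); the case $p=r$ is handled separately and produces the residual examples such as ${\rm PSL}_2(11)\supset {\rm Alt}(5)$. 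No appeal to the Aschbacher/Kleidman--Liebeck classification of maximal subgroups is needed --- which is just as well, since that machinery essentially postdates \cite{Gura} --- and your proposed sweep of all non-parabolic maximal subgroups of all classical and exceptional families is a substantially larger computation than the theorem requires.

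The one concrete gap is in your treatment of ${\rm Alt}(n)$. After reducing to $H$ maximal, the coset action is primitive of degree $p^a$, but the Bochert--Wielandt-type bounds you invoke only dispose of the subcase where $H$ is \emph{primitive} in the natural action --- and even there the bound alone proves nothing, since $p^a$ is a priori unbounded; you must also observe that $p^a$ divides $n!$, so $p^a$ is at most the $p$-part of $n!$, which is less than $2^n$, before comparing with the factorial-size lower bound on the index of a primitive subgroup. When $H$ is intransitive, its index is divisible by a binomial coefficient $\binom{n}{k}$ with $2\leq k\leq n/2$, a quantity polynomial in $n$ for fixed $k$ and hence invisible to any magnitude bound; what is actually needed is the arithmetic theorem of Sylvester--Erd\H{o}s that such binomial coefficients are never prime powers (the case $k=1$ giving (a)), with a parallel arithmetic argument for transitive imprimitive $H$. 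Finally, your passage from maximal to arbitrary subgroups is not ``immediate from order factorizations'': one must run an induction, applying the statement of the lemma itself to each listed maximal $M$ (e.g.\ checking that ${\rm Alt}(p^a-1)$ has no subgroup of $p$-power index) to see that no composite chain produces a new example. These are all repairable with standard tools, but as written they are real holes in the sketch.
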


Finally, we state another auxiliary result, which is quite well known and also relies on the Classification.

\begin{lemma}{\normalfont \cite[Proposition]{BGH}} \label{out} Let $S$ be  a nonabelian simple group. Then the greatest prime divisor of $|S|$ does not divide $|{\rm Out}(S)|$.
\end{lemma}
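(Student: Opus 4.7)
The plan is to appeal to the Classification of Finite Simple Groups and verify the statement family by family, since in each case both $|S|$ and $|{\rm Out}(S)|$ are explicitly known. So I would partition the argument according to the CFSG list: alternating, sporadic, and Lie type (classical and exceptional).

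First I would dispatch the alternating and sporadic groups. For $S = {\rm Alt}(n)$ with $n \geq 5$ we have $|{\rm Out}(S)| = 2$ (with $|{\rm Out}({\rm Alt}(6))| = 4$), and Bertrand's postulate supplies an odd prime $p$ with $n/2 < p \leq n$ that divides $|S|$ and exceeds every prime divisor of $|{\rm Out}(S)|$. For the $26$ sporadic groups, $|{\rm Out}(S)| \in \{1,2\}$, and a quick inspection of the Atlas shows in every case that the largest prime divisor of $|S|$ is odd (and in fact considerably larger than $2$).

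For the groups of Lie type, I would exploit the standard factorisation $|{\rm Out}(S)| = d\cdot f\cdot g$, where $d$ is the order of the diagonal automorphism group (bounded by the rank data, e.g.\ $\gcd(n,q-1)$ for ${\rm PSL}_n(q)$), $f$ is the order of the field automorphism group (so $f$ divides $\log_p q$), and $g \leq 3$ is the graph contribution. The objective is to exhibit a prime $\ell$ dividing $|S|$ that is strictly larger than every prime divisor of $dfg$. The natural device is Zsygmondy's theorem applied to the largest cyclotomic factor $q^m-1$ appearing in the order formula for $|S|$: take a primitive prime divisor $\ell$. Then $\ell \equiv 1 \pmod m$, so $\ell > m$, comfortably beating the typical prime divisors of $d$ and $g$; moreover $\ell \neq p$, and comparing $\ell$ with $\log_p q$ rules out $\ell \mid f$. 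Thus $\ell$ is a prime divisor of $|S|$ larger than every prime divisor of $|{\rm Out}(S)|$, whence so is the greatest prime divisor of $|S|$.

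The main obstacle is the list of Zsygmondy exceptions together with low-rank, small-field cases, where the clean primitive-prime-divisor argument breaks down (for example ${\rm PSL}_2(q)$ with $q+1$ a power of $2$, ${\rm PSL}_6(2)$, or various small classical and exceptional groups over $\mathbb{F}_2$ and $\mathbb{F}_3$). Each of these finitely many configurations must be verified by hand from the explicit tables of $|S|$ and $|{\rm Out}(S)|$ in Kleidman--Liebeck or in the Atlas. No single uniform estimate covers every group, but the residual list is finite and short, so the case-check terminates.
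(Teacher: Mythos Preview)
The paper does not prove this lemma; it is simply quoted from \cite[Proposition]{BGH} and used as a black box in the proof of Theorem~B. There is therefore no in-paper argument to compare your proposal against.

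Your sketch is the standard CFSG case analysis and is correct in outline, and is presumably close to what appears in the cited reference. One sharpening worth recording: if $q = p^f$ and $\ell$ is a primitive prime divisor of $q^m - 1 = p^{fm} - 1$, then the multiplicative order of $p$ modulo $\ell$ is exactly $fm$, so $fm \mid \ell - 1$ and hence $\ell > fm$. This single inequality simultaneously handles the field-automorphism contribution (since $\ell > f$) and, because $m$ dominates the rank data governing $d$ and $g$, the diagonal and graph contributions as well; so the generic Lie-type step is a little cleaner than your write-up suggests. The residual finite check of Zsigmondy exceptions and small-field cases is, as you note, unavoidable but short.
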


\section{Proofs}

\begin{proof}[Proof of Theorem A]

We argue by  minimal counterexample, and suppose that $G$ is a non-solvable group of minimal order satisfying the assumptions.
 The hypotheses are clearly inherited by  quotients,  so it can be assumed that $G$ possesses exactly one minimal normal subgroup, say $N$, which is not solvable (possibly $N=G$).
   As we said,  $G/N$ is solvable by minimality, and then $N$ is a direct product of isomorphic nonabelian simple groups.

\medskip
 We take $p$ to be the largest (odd) prime dividing $|N|$ and $P$ a Sylow $p$-subgroup of $N$.
  We apply Lemma \ref{maximal} to $N$ and get that there exists a non-$p$-nilpotent maximal subgroup of $G$, say $M$, such that $N\not\leq M$ and $L:={\bf N}_G(P)\leq M$.  Hence, by hypothesis  $M$ has prime  or squared-prime index in $G$, say $|G:M|=q$ or $q^2$.  In addition, we have $G=NM$ and then $|G:M|=|N: N\cap M|$ , from which we deduce that $q$ divides $|N|$, and consequently $q \leq p$ (a priori possibly equal).
Next, the conjugates of $P$ in $G$ account for all the Sylow $p$-subgroups of
$N$; therefore $|G:L| \equiv 1$ (mod $p)$ by Sylow's Theorem. By the same reason
$|M: L | = 1$ (mod $p)$, and it follows that $|G : M|\equiv 1$ (mod $p)$. This forces $q \not\equiv 1$ (mod $p)$
because $q \leq p$, so we are left
with the possibility $|G:M| = q^2 \equiv 1$ (mod $p )$ and thus, $q \equiv -1$ (mod $p)$. However, this is only possible when $p = 3$
and $q = 2$. Consequently, $| N : N \cap M| = 4$ and hence $N$ has as an image some
nontrivial subgroup of the symmetric group ${\rm Sym}(4)$, contradicting the non-solvability of $N$. This implies that the counterexample does not exist.
\end{proof}

Before proving Theorem B, we need to prove Theorem C, which,  as said in the Introduction, has own interest.

\begin{proof}[Proof of Theorem C]
In view of  Lemma \ref{g} we only have two possibilities for $S$: $S \cong {\rm Alt}(q^2)$ or $S\cong {\rm PSL}_n(s)$ with $(s^n-1)/(s-1)=q^2\geq 9$ (with $n$ prime).
 Thus, we only have to analyze the second case.
Recall that the order of the Singer cycle of ${\rm PSL}_n(s)$ is equal to $$\frac{s^n-1}{(n,s-1)(s-1)}=\frac{q^2}{(n,s-1)},$$ (see  \cite[2.7.3]{Hup}) and this is an integer.
As $n$ is prime, this yields two possibilities: either  $(n,s-1)=q$ or $1$.   Assume first $(n,s-1)=q$. Then $q$ divides $(s-1)$, so in particular  $q^2< s^2$.
If $n> 2$, then the fact that $q^2= s^{n-1}+ \ldots +s+1$ provides a contradiction. If $n=2$, then $q^2= s+1$, and hence $q$ divides $(s+1,s-1)=1$ or $2$, which is also a contradiction.
Assume now the second possibility, $(n,s-1)=1$, and write $s=r^d$, where $r$ is  a prime and $d$ is a positive integer.
Now, if $n=2$, then  $q^2= r^d+1$, and since $q$ is odd this  trivially implies that $d>1$.
Then, it is well known  (Catalan's conjecture) that the only solution of such  equation is $q=3$, $r=2$ and $d=3$, that is, $S\cong {\rm PSL}_2(8)$, so we obtain (b).
On the other hand, if $n> 2$, then the diophantine equation $$\frac{s^{n}-1}{s-1}= q^2$$ is a particular case of the so-called Nagell-Ljunggren equation (see \cite{BM}).
Then, as $q$ is a prime number, it is known that there is exactly  one solution (see \cite[Theorem NL]{BM}); this is $n=5, s=3$ and $q=11$. This means that $S\cong {\rm PSL}_5(3)$, so the proof is finished.
\end{proof}

\begin{proof}[Proof of Theorem B]

 We argue by  minimal counterexample, and suppose that $G$ is a non-solvable group of minimal order satisfying the hypotheses.
 Take $N$ to be a minimal normal subgroup of $G$. The hypotheses are certainly inherited by quotients,
   so $G/N$ is solvable by minimality.
   Furthermore, it can be assumed that $G$ possesses exactly one minimal normal subgroup, say $N$, which is non-solvable (possibly $N=G$).
    Then $N$ is a direct product of isomorphic nonabelian simple groups.

 \medskip
 Next we claim that there exists a maximal subgroup $M$ of $G$ such that $|G:M|$ is prime or a squared prime and $N\not\subseteq M$.
 Let $p\geq 5$ be  a prime divisor of $|N|$, and take $P$ to be a Sylow $p$-subgroup of $N$. Then the Frattini argument gives $G={\bf N}_G(P)N$.
 Assume first that ${\bf N}_G(P)$ is not maximal in $G$. Then the hypotheses imply  that ${\bf N}_G(P)$ lies in is some subgroup $M$ of $G$ of index $q$ or $q^2$ for some prime $q$.
   Without loss we can assume that $M$ is maximal in $G$, otherwise, there exists  a maximal subgroup $M_1$ of $G$, with $M\leq M_1$  and $|G:M_1|=|M_1:M|=q$,
    and then we can replace $M$ by $M_1$. On the other hand, it is clear that $N\not\subseteq M$, so the claim is proved.
Suppose now that ${\bf N}_G(P)$ is maximal in $G$.
 Assume  first that ${\bf N}_N(P)\subseteq \Phi({\bf N}_G(P))$. In particular, we have that ${\bf N}_N(P)$ is nilpotent.
  Since $p>3$, then Thompson's theorem \cite[Theorem X.8.13]{HB} implies that ${\bf O}^p(N)<N$, which is a contradiction.
   Therefore the assumption is false, and there exists a maximal subgroup $L$ of ${\bf N}_G(P)$ such that ${\bf N}_N(P)\not \subseteq L$.
    It follows that ${\bf N}_G(P)=L{\bf N}_N(P)$, and hence $G={\bf N}_G(P)N=LN$.
    However, $L$ is not maximal in $G$, so by hypothesis there exists some  subgroup $M$ of $G$ such that $L\subseteq M$ and $M$ has prime or squared prime index.
     As above, there is no loss if we can assume that $M$ is maximal in $G$, and since $G=MN$, then $N\not\subseteq M$. Therefore, the claim is proved.

\medskip
 Henceforth, we take $M$ to be a maximal subgroup of $G$ with $N\nleq M$ and  the index of $M$ is prime, say  $q$, or the square of a prime, say $q^2$.
   The rest of the proof consists in proving  that this leads to a contradiction.

 \medskip
Notice that $G=MN$. Write $N=S_1\times \cdots\times S_r$, where $S_i$ are isomorphic nonabelian simple groups and note that $M$ acts transitively on $\{S_1,\ldots,S_r\}$.  It is clear that  $S_i\nleq M$ for every $i$. Let $N_0:=M\cap N$. By  applying Lemma \ref{m2}, we obtain $N_0=K_1\times \cdots\times K_r$, where $K_i < S_i$ and $M$ acts transitively on the set  $\{K_1,\cdots, K_r\}$, because   Lemma \ref{m2}(2)  leads to that $|S_i|$ divides $|G:M|$, and this is not possible. This implies that $|G:M|=|N:N_0|=|S_i:K_i|^r$ and this forces $r\leq 2$. Accordingly, we distinguish two cases: $r=2$ and $r=1$.

\medskip
  (a) Assume first that $r=2$ and write $N_0=K_1\times K_2$. Then $|S_i:K_i|=q$ and $S_i/{\rm core}_{S_i}(K_i)\leq {\rm Sym}(q)$, and hence, it follows that  $q$ does not divide $|K_i|$ and $q$ is the largest prime divisor of $|S_i|$. This shows that if the maximal subgroup of $G$ has index the square of a prime, then the prime is the largest prime divisor of $|N|$.  Thus, $q\geq 5$.
 Moreover, by the uniqueness of $N$, we have  ${\bf C}_G(N) =1$, and  accordingly $N<G\leq {\rm Aut}(N)$.
 Hence $G\leq ({\rm Aut}(S_1)\times {\rm Aut}(S_2))\rtimes A$, where $A$ permutes ${\rm Aut}(S_i)$, and is cyclic of order 2. We remark that $A\leq G$ since $G$ acts transitively on $\{S_1, S_2\}$.
  Since $q$ is the largest prime divisor of  $|S_i|$, by applying Lemma \ref{out},
   we deduce that $G/N$ is a $q'$-group. As a consequence, if  $Q$ a Sylow $q$-subgroup of $S_1$, then $Q\times Q$ is a Sylow $q$-subgroup of $G$.
   Let us consider $(Q\times Q )\rtimes A $, which  is    not maximal in $G$ because $Q <{\bf N}_{S_1} (Q)$ (this follows again by \cite[Theorem X.8.13]{HB})
  as in the second paragraph of the proof).
      Then, by hypothesis,  $(Q\times Q )\rtimes A $   should be contained in a maximal subgroup of $G$, say $L$, with prime index or squared prime index
    However, we also have $LN=G$ and then,  we can argue as above to get that only case (1) of Lemma \ref{m2} can happen, that is,
    $L\cap N= H_1\times H_2$ with $H_i<S_i$ and $|G:L| = |N:L\cap N | = |S_1:H_1|^2=r^2$ for some prime $r$. Of course $r<q$ since $Q\times Q\leq L$. Now, the simplicity of $S_i$
    combined with the fact that $S_i$ has a proper subgroup of index less than $q$ clearly leads to a contradiction.

\medskip
(b) Suppose that $r=1$, that is, $N$ is a nonabelian simple group and $N\leq G\leq {\rm Aut}(N)$. In addition, we know that $|G:M|=|N:N\cap M|$ is a prime or a square of a prime.

\medskip
(b.1) Assume first that $|N:N\cap M|=q$ is prime. Again we have that $q$ is the largest prime divisor of $|N|$ and $G/N$ is a $q'$-group by Lemma \ref{out}.
 According to Lemma \ref{g}, we only have to analyze each of these cases: $N\cong {\rm Alt}(q);  {\rm PSL}_n(q_0)$ with $(q_0^n -1)/(q_0 -1)$ prime;   $ {\rm PSL}_2(11)$;   $M_{23}$  and $M_{11}$.

\medskip

 For the case $N\cong {\rm Alt}(q)$ with $q\geq 5$, we have $G\cong {\rm Alt}(q)$ or  $G\cong {\rm Sym}(q)$.
  In the first case, it is clear that any Sylow $q$-subgroup of $G$ is not maximal in $G$ and is not contained in any subgroup of prime or squared prime index.
  For $G={\rm Sym}(q)$, however, a Sylow $q$-subgroup $Q$ of $G$ does not work since it is contained in $N={\rm Alt}(q)$ of index $2$. We proceed as follows instead.
  By the Frattini argument we have $G={\bf N}_{G}(Q)N$ and $|{\bf N}_G(Q):{\bf N}_N(Q)|=2$.     In fact, if we write $G=NT$ with $|T|=2$,
  then we can write         ${\bf N}_G(Q) = {\bf N}_N(Q)  \rtimes T$.
Notice that $QT$ is not maximal in $G$ because $Q<{\bf N}_G(Q)\cong Q\rtimes C_{q-1}$. Also, if $QT$ were contained in some subgroup
$L\leq G$ of prime index or squared prime index, we would have $G=NL$, and consequently $|N: L\cap N|$ would be a prime or a squared prime. However,
the only subgroups of $N$  with such index are those  isomorphic to ${\rm Alt}(q-1)$, which obviously cannot  contain $Q$.
This contradiction shows that ${\rm Sym}(q)$ does not satisfy the conditions of the theorem, and hence it is discarded too.

\medskip
Suppose  $N\cong  {\rm PSL}_n(q_0)$ with $\frac{q_0^n-1}{q_0-1}=q$  (and $n$  a prime).
 By \cite[2.7.3]{Hup}, we know that $N$ has a Singer cycle of order $$\frac{q_0^n-1}{(q_0-1)(n,q_0-1)}=\frac{q}{(n,q_0-1)}.$$
 Then we get  $(n,q_0-1)=1$.  By \cite[Tables]{BHR} and \cite[Proposition 4.1.17]{Liebeck2}, we have $N\cap M=E_{q_0}^{n-1}:{\rm SL}_{n-1}(q_0)$ (we follow the notation of \cite{BHR}).
 Again by using \cite[Tables]{BHR} and \cite[Proposition 4.1.17]{Liebeck2}, we consider a generator $\phi$ of the field automorphisms group  of $N$  such that  $N\cap M$ is  $\phi$-invariant,
 and  the graph automorphism of $N$, say $\gamma$, so  $G\leq N\langle \phi, \gamma\rangle$.
 Now, if $N\langle \gamma\rangle \leq  G$, then,  by using \cite[Tables]{BHR} and \cite[Proposition 4.1.17]{Liebeck2} again,  we get  $(M\cap N)\langle \gamma\rangle\nleq M$.
 This implies that $2$ divides $|G:M|$ and thus, $q=2$, a contradiction.
 This shows that $G\leq N\langle\phi \rangle$. Therefore, there exists some $i\geq 1$ such that $G=N\langle \phi^i\rangle$.
 Write $\phi_1=\phi^i$, that is, $G=N\langle \phi_1 \rangle$.
 Let $H$ be a Singer cycle of $N$ such that ${\bf N}_N(H)$ is $\phi_1$-invariant (see \cite[Tables]{BHR} and \cite[Proposition 4.1.17]{Liebeck2} again).
 Then ${\bf N}_N(H)\langle \phi_1\rangle< G$ and hence $H \langle \phi_1 \rangle$ is not maximal in $G$.
By hypothesis, there exists a maximal subgroup $M_1$ of $G$ such that $H \langle \phi_1 \rangle< M_1$ and $|G:M_1|$ is a prime or a square of a prime.
 Since $M_1=(M_1\cap N)\langle \phi_1\rangle$, we have that   $|G:M_1|=|N:N\cap M_1|$  is a prime or square of a prime.
 By applying Lemma \ref{g}, we deduce that $|N:N\cap M_1|=\frac{q_0^n-1}{q_0-1}=q$, which is a contradiction.

\medskip
 If $N\cong {\rm PSL}_2(11)$, then Out$(N)\cong C_2$, and thus, $G\cong {\rm PSL}_2(11)$ or $G\cong {\rm PGL}_2(11)$. In the first case, the maximal subgroups of $G$,  apart from ${\rm Alt}(5)$ of index $11$,
  are isomorphic to $C_{11}\rtimes C_5$ or  to $D_{12}$. But no subgroup of order $11$ is  maximal  in $G$ and neither satisfies
  the conditions of the theorem, so this case is impossible.    Suppose now that $G={\rm PGL}_2(11)$ and let $\delta$ be the diagonal automorphisms of ${\rm PSL}_2(11)$, so  $G=N\langle \delta \rangle$. Let $P_{11}$ be a Sylow $11$-subgroup of ${\rm PSL}_2(11)$ that is $\delta$-invariant.  By \cite{Con}, we know that $P_{11}\langle \delta \rangle$ is not maximal in $G$.
  Therefore, there should exist a  maximal subgroup $U$ of $G$ such that $P_{11}\langle \delta \rangle< U$ and $|G:U|$  is a prime  or a square of a prime. But this also leads to a contradiction according to \cite{Con}.
\medskip

  Finally,  both sporadic groups have trivial outer automorphism group, so in both cases $G=N$.
  By using \cite{Con}, one can check  that in $M_{11}$ any subgroup of order $11$ is not contained
   in a subgroup of prime index or squared prime index (in fact, $M_{10}$ is the unique such subgroup, and has index $11$).
    Similarly, for $M_{23}$, every subgroup isomorphic to $C_{23}$ is  not maximal and does not satisfy our conditions.

\medskip

  (b.2) Assume now that  $|N:N\cap M|=q^2$ with $q$ prime.
  In view of  Theorem C, there are only  three cases to study:
   $N \cong {\rm Alt}(q^2), {\rm PSL}_2(8)$ or ${\rm PSL}_5(3)$.
   In the first case, it follows that  $G\cong  {\rm Alt}(q^2)$ or $ {\rm Sym}(q^2)$. Assume first that $G\cong{\rm Alt}(q^2)$ (with $q\geq 3$).
   It is immediate that every Sylow $q$-subgroup of $G$, which is not maximal, is not contained in any subgroup or prime index or squared prime index
   (because the unique such subgroup is ${\rm  Alt}(q^2-1)$).

\medskip
   For $G\cong {\rm Sym}(q^2)$, again we take
    $Q$ a Sylow $q$-subgroup of $G$,  which is also a Sylow $q$-subgroup of $N= {\rm Alt}(q^2)$.
By the Frattini argument we have $G={\bf N}_{G}(Q)N$ and
$|{\bf N}_G(Q):{\bf N}_N(Q)|=2$.     Indeed, if we write $G=NT$ where $|T|=2$, then we have  ${\bf N}_G(Q) = {\bf N}_N(Q)  \rtimes T$.
But notice that $QT$ is not maximal in $G$ because we know that  $Q<{\bf N}_G(Q)$. Moreover, if $QT$ were contained in some subgroup
$L$ of prime index or squared prime index, we would have $G=NL$, and consequently $|N: L\cap N|$ would be prime or squared prime. However,
the only subgroups of $N$  with such an index are those  isomorphic to ${\rm Alt}(q^2-1)$, which obviously cannot  contain $Q$.
This contradiction shows that ${\rm Sym}(q^2)$  can be discarded too.

\medskip
 Assume now $N\cong {\rm PSL}_2(8)$, whose outer automorphism groups has order $3$.
 Accordingly, we have  $G\cong {\rm PSL}_2(8)$ or $G \cong {\rm PSL}_2(8).3 \cong {\rm P\Gamma L}_2(8)$. By looking at \cite{Con},
  we easily check that both groups have Sylow 3-subgroups, which are not maximal subgroups, and  that are not contained in any subgroup of prime index  or squared prime index.
  Thus, this case cannot happen.

\medskip
Finally, suppose $N\cong {\rm PSL}_5(3)$.
 Its outer automorphism group has order 2, so $G= {\rm PSL}_5(3)$ or ${\rm PSL}_5(3).2$.
 By \cite[Table 8.18]{BHR} for instance, it is easily seen that the Sylow $11$-subgroups for both cases of  $G$ do not verify the index conditions of the theorem either.
 This final contradiction proves the solvability of $G$.
\end{proof}

\begin{remark}  As we said in the Introduction, when every proper non-maximal subgroup of a group $G$ lies in some subgroup of prime index, then $G/{\bf F}(G)$ is  supersolvable.
This does not occur under the assumptions of Theorem B. The affine special linear group $G={\rm ASL}(2,3)$ satisfies such conditions, indeed, the indexes of its maximal subgroups are 3, 4 and 9. 
However, $G/{\bf F}(G)\cong {\rm SL}_2(3)$, which is not supersolvable.
\end{remark}

\medskip
\noindent
{\bf Acknowledgements}

\medskip
This work is supported by the National Nature Science Fund of China (No. 12071181 and No. 12471017). A. Beltr\'an is also supported  by Generalitat Valenciana, Proyecto CIAICO/2021/193.
C.G. Shao is also supported by Natural Science Research Start-up Foundation of Recruiting Talents of Nanjing University of Posts and Telecommunications (Grant Nos. NY222090, NY222091).

 \bigskip
 \noindent
{\bf Data availability}
 Not applicable. Since this is a manuscript in theoretical mathematics, all the data we have used is within the papers listed in the
section References.

\bigskip
\noindent
{\bf Declarations}

\bigskip
\noindent
{\bf Conflict of Interest} The authors declare no competing interests.

\bibliographystyle{plain}

\end{document}